\newtheorem{definition}{Definition} 
\newtheorem{proposition}{Proposition} 
\newtheorem{remark}{Remark}
\title{\LARGE \bf
An under-approximation for the robust uncertain two-level cooperative set covering problem}
\author{Shuxin Ding,
	Qi Zhang,
	and Zhiming Yuan
\thanks{This work was supported in part by the National Natural Science Foundation of China under grant 61790575, U1834211, U1934220, and in part by the Foundation of China Academy of Railway Sciences Corporation Limited under Grant 2019YJ071. {\it(Corresponding author: Qi Zhang)}}
\thanks{Shuxin Ding, Qi Zhang, and Zhiming Yuan are with the Signal and Communication Research Institute, China Academy of Railway Sciences Corporation Limited, Beijing 100081, China.
	{\tt\small dingshuxin@rails.cn; zhangqi@rails.cn; zhimingyuan@hotmail.com}}
}
\begin{document}

\maketitle
\thispagestyle{empty}
\pagestyle{empty}

\begin{abstract}

This paper investigates the robust uncertain two-level cooperative set covering problem (RUTLCSCP). Given two types of facilities, which are called y-facility and z-facility. The problem is to decide which facilities of both types to be selected, in order to cover the demand nodes cooperatively with minimal cost. It combines the concepts of robust, probabilistic, and cooperative covering by introducting ``$\Gamma$-robust two-level-cooperative $\alpha$-cover'' constraints. Additionally, the constraint relaxed verison of the RUTLCSCP, which is also a linear approximation robust counterpart version of RUTLCSCP (RUTLCSCP-LA-RC), is developed by linear approximation of the constraints, and can be stated as a compact mixed-integer linear programming problem. We show that the solution for RUTLCSCP-LA-RC, $\varepsilon$-under-approximate solution, can also be the solution for RUTLCSCP on some conditions. Computational experiments show that the solutions in 333 instances (10125 instances in total) with 12 types which tinily violate the constraints of RUTLCSCP, can be an efficient under-approximate solutions, while the feasible solutions in other instances are proven to be optimal.
\end{abstract}


\section{Introduction}

The set covering problem (SCP) is one of the most studied combinatorial optimization problems. In the SCP, a set $\mathcal{I}=\{1,\dots,m\}$ of $m$ demand nodes, a set $\mathcal{J}=\{1,\dots,n\}$ of $n$ potential facility location sites and their building costs $c_j$ are given. The 0-1 matrix $A=[a_{ij}]_{m\times n}$ indicates whether a location $j\in\mathcal{J}$ is able to cover a demand node $i\in\mathcal{I}$. The goal of SCP is to find a minimum cost cover of the demand nodes by $x$ where $x_j$ is a binary value whether site $j$ is seleted. It is proven to be NP-complete \cite{chvatal1979greedy}.

\vspace*{-4mm}
\begin{small}
\begin{align}
\min~&\sum\limits_{j\in\mathcal{J}}c_jx_j&\label{SCP:1}\\
{\rm s.t.}~&\sum\limits_{j\in\mathcal{J}}a_{ij}x_j\geq 1&\forall i \in \mathcal{I}\label{SCP:2}\\
&x_j\in \{0,1\}&\forall j \in \mathcal{J}\label{SCP:3}
\end{align}
\end{small}
\vspace*{-6mm}

The SCP has widely been used in many real-world application, especially in facility locations \cite{farahani2012covering}, where both exact and heuristic algorithms are proposed to deal with it. Daskin \cite{daskin1983maximum} considers the facility may not by working with probability, and it can be applied in many application, e.g., node deployment in wireless sensor networks \cite{ding2014improved}, weapon platforms \cite{ding2015status}, etc. Beraldi \textit{et al.} \cite{beraldi2002probabilistic} proposed the probabilistic set-covering aiming at covering constraint satisfied with a predefined probability. Aardal \textit{et al.} \cite{aardal1996two} considered more than one facility type, and proposed a two-level uncapacitated facility location problem. Berman \textit{et al.} \cite{berman2009cooperative} first proposed the cooperative cover model with one facility type.

Pereira \textit{et al.} \cite{pereira2013robust} proposed the robust SCP with uncertain cost coefficients within predefined interval. To the best of our knowledge, the robust set covering problem with probabilistically and cooperative covering by two types of facilities has not previouly analyzed. Xin \textit{et al.} \cite{xin2018efficient} discussed the sensor-weapon-target assignment problem as a collaborative task assignment of sensor and weapon platforms. The probability of capturing the target is similar to the cooperative covering in this paper, while the former is regarded as the objective function. We summarize our contributions as follows:
\begin{enumerate}
	\item [1.] A compact mixed-integer linear programming formulation is proposed by utilizing robust optimization and constraint relaxation.	
	\item [2.] The proposed formulation is analyzed on a large set of test cases with 10125 different instances.
	\item [3.] A majority of the under-approximate soloutions are proven to be optimal while few of them slightly violate the constraints and provide an efficient lower bound.
\end{enumerate}

The rest of this paper is organized as follows. Section \ref{sec2} formulates the robust uncertain two-level cooperative set covering problem. Section \ref{sec3} presents some properties of the model. Performance evaluation results are presented and analyzed in Section \ref{sec4}. Conclusions are given in Section \ref{sec5}.

\section{Formulating the Robust Uncertain Two-Level Cooperative Set Covering Problem}\label{sec2}

\subsection{The Deterministic and Uncertain Two-Level Cooperative Set Covering Problem}

In the Two-Level Cooperative Set Covering Problem (TLCSCP), a set $\mathcal{I}=\{1,\dots,m\}$ of $m$ demand nodes, a set $\mathcal{J}=\{1,\dots,n_1\}$ of $n_1$ potential y-facility location sites and a set $\mathcal{K}=\{1,\dots,n_2\}$ of $n_2$ potential z-facility location sites are given. The 0-1 matrix $A=[a_{ij}]_{m\times n_1}$ or $B=[b_{ik}]_{m\times n_2}$ indicates whether a location $j\in\mathcal{J}$ or $k\in\mathcal{K}$ is able to cover a demand node $i\in\mathcal{I}$. $c^1_j$ represents the costs of building y-facility located in site $j$, and $c^2_k$ represents the costs of building z-facility located in site $k$. Both $y_j$ and $z_k$ are binary value, which means whether building a y-facility in site $j$ and z-facility in site $k$. The objective is to find two subsets $\mathcal{C}^1\subseteq\mathcal{J}$ and $\mathcal{C}^2\subseteq\mathcal{K}$ with minimal cost $c(\mathcal{C}^1,\mathcal{C}^2):=\sum_{j\in\mathcal{C}^1}c^1_j+\sum_{k\in\mathcal{C^2}}c^2_k$ covering all the demand nodes, i.e., for each demand node $i\in\mathcal{I}$ there exists at least one y-facility $j\in\mathcal{C}^1$ and z-facility $k\in\mathcal{C}^2$ which ensures $a_{ij}=1$ and $b_{ik}=1$ simultaneously. A standard binary nonlinear programming formation of Two-Level Cooperative Set Covering Problem is defined as

\vspace*{-4mm}
\begin{small}
\begin{align}
\min~&\sum\limits_{j\in\mathcal{J}}c^1_jy_j+\sum\limits_{k\in\mathcal{K}}c^2_kz_k&\label{TLCSCP:1}\\
{\rm s.t.}~&\left(\sum\limits_{j\in\mathcal{J}}a_{ij}y_j\right)\cdot\left(\sum\limits_{k\in\mathcal{K}}b_{ik}z_k\right)\geq 1&\forall i \in \mathcal{I}\label{TLCSCP:2}\\
&y_j\in \{0,1\}&\forall j \in \mathcal{J}\label{TLCSCP:3}\\
&z_k\in \{0,1\}&\forall k \in \mathcal{K}.\label{TLCSCP:4}
\end{align}
\end{small}where Eq. (\ref{TLCSCP:1}) minimize the building cost of two kinds of facilities. Eq. (\ref{TLCSCP:2}) ensures that for each demand node, it is covered at least one y-faciliy and z-facility simultaneously. Eqs. (\ref{TLCSCP:3}) and (\ref{TLCSCP:4}) ensures decision varibles are binary value. Since $a_{ij}$, $b_{ik}$, $y_j$ and $z_k$ are binary value, TLCSCP is equivalent to the following integer linear programming formulation:

\vspace*{-4mm}
\begin{small}
\begin{align}
\min~&\sum\limits_{j\in\mathcal{J}}c^1_jy_j+\sum\limits_{k\in\mathcal{K}}c^2_kz_k\nonumber&\\
{\rm s.t.}~&\sum\limits_{j\in\mathcal{J}}a_{ij}y_j\geq 1&\forall i \in \mathcal{I}\label{TLCSCP:5}\\
&\sum\limits_{k\in\mathcal{K}}b_{ik}z_k\geq 1&\forall i \in \mathcal{I}\label{TLCSCP:6}\\
&y_j\in \{0,1\}&\forall j \in \mathcal{J}\nonumber\\
&z_k\in \{0,1\}&\forall k \in \mathcal{K}\nonumber,
\end{align}
\end{small}where Eqs. (\ref{TLCSCP:5}) and (\ref{TLCSCP:6}) linearize Eq. (\ref{TLCSCP:2}). And similar to Set Covering Problem \cite{lutter2017improved}, Two-Level Cooperative Set Covering Problem is also a NP-hard combinatorial optimizaiton problem.


Then, the Generalized Uncertain Two-Level Cooperative Set Covering Problem (GUTLCSCP) is formulated based on TLCSCP, which introduces uncertainty into covering model. $a_{ij}$ and $b_{ik}$ are independent random binary variable: with a probability of $1-p_{ij}$ when $a_{ij}=1$ and $p_{ij}$ when $a_{ij}=0$; with a probability of $1-q_{ik}$ when $b_{ik}=1$ and $q_{ik}$ when $b_{ik}=0$. Since the probabilities are assumed to be independent, the probability of two sets $\mathcal{C}^1$ and $\mathcal{C}^2$ coopeartively covering demand node $i$ is as follows:

\vspace*{-4mm}
\begin{small}
\begin{align*}
&P\!\left(\sum\limits_{j\in\mathcal{C}^1}a_{ij}\!\geq\! 1\right)\!\!=\!1-\!\!\prod_{j\in\mathcal{C}^1}\!p_{ij},~
P\!\left(\sum\limits_{k\in\mathcal{C}^2}b_{ik}\!\geq\! 1\right)\!\!=\!1-\!\!\prod_{k\in\mathcal{C}^2}\!q_{ik}.\\
&P\!\!\left[\!\!\left(\sum\limits_{j\in\mathcal{C}^1}a_{ij}\!\geq\!1\right)\!\!\cdot\!\!\left(\sum\limits_{k\in\mathcal{C}^2}b_{ik}\!\geq\!1\right)\!\!\right]
\!\!=\!\!\left(\!\!1-\!\!\prod_{j\in\mathcal{C}^1}p_{ij}\!\!\right)\!\!\cdot\!\!\left(\!\!1-\!\!\prod_{k\in\mathcal{C}^2}q_{ik}\!\!\right).
\end{align*}
\end{small}Then, the GUTLCSCP can be formulated as binary model given by

\vspace*{-4mm}
\begin{small}
\begin{align}
\min~&\sum\limits_{j\in\mathcal{J}}c^1_jy_j+\sum\limits_{k\in\mathcal{K}}c^2_kz_k&\nonumber\\
{\rm s.t.}~&P\left(\sum\limits_{j\in\mathcal{J}}a_{ij}y_j\geq 1,\sum\limits_{k\in\mathcal{K}}b_{ik}z_k\geq 1\right)\geq\alpha&\forall i \in \mathcal{I}\label{GUTLCSCP:1}\\
&y_j\in \{0,1\}&\forall j \in \mathcal{J}\nonumber\\
&z_k\in \{0,1\}&\forall k \in \mathcal{K}\nonumber.
\end{align}
\end{small}

\vspace*{-5mm}
When a solution $y^*\in\{0,1\}^{n_1}$ and $z^*\in\{0,1\}^{n_2}$ is feasible for the GUTLCSCP, Eq. (\ref{GUTLCSCP:1}) is equivalent to

\vspace*{-4mm}
\begin{small}
\begin{align}
\left(1-\prod_{j\in\mathcal{C}^1(y^*)}p_{ij}\right)\cdot\left(1-\prod_{k\in\mathcal{C}^2(z^*)}q_{ik}\right)\geq\alpha\label{GUTLCSCP:3}
\end{align}
\end{small}for all $i\in\mathcal{I}$ with $\mathcal{C}^1(y^*)=\{j\in\mathcal{J}|y^*=1\}$ and $\mathcal{C}^2(z^*)=\{k\in\mathcal{K}|z^*=1\}$. The sets $\mathcal{C}^1(y^*)$ and $\mathcal{C}^2(z^*)$ satifying Eq. (\ref{GUTLCSCP:3}) is referred as two-level-cooperative $\alpha$-cover.

Therefore, GUTLCSCP can be reformulated as:

\vspace*{-4mm}
\begin{small}
\begin{align}
\min~&\sum\limits_{j\in\mathcal{J}}c^1_jy_j+\sum\limits_{k\in\mathcal{K}}c^2_kz_k&\nonumber\\
{\rm s.t.}~&\left(1-\prod_{j\in\mathcal{J}}p_{ij}^{y_j}\right)\cdot\left(1-\prod_{k\in\mathcal{K}}q_{ik}^{z_k}\right)\geq\alpha&\forall i\in \mathcal{I}\label{GUTLCSCP:2}\\
&y_j\in \{0,1\}&\forall j \in \mathcal{J}\nonumber\\
&z_k\in \{0,1\}&\forall k \in \mathcal{K}\nonumber.
\end{align}
\end{small}where Eq. (\ref{GUTLCSCP:2}) is a nonlinear constraint. A linear approximation method is given as follows.

In Eq. (\ref{GUTLCSCP:2}), set $m_i=\prod_{j\in\mathcal{J}}p_{ij}^{y_j}$, $n_i=\prod_{k\in\mathcal{K}}q_{ik}^{z_k}$, $(1-m_i)(1-n_i)\geq\alpha$ for all $i\in\mathcal{I}$. The original constraint Eq. (\ref{GUTLCSCP:2}) can be reformulated as:
\begin{align}
\begin{cases}
m_i = \prod_{j\in\mathcal{J}}p_{ij}^{y_j}\\
n_i = \prod_{k\in\mathcal{K}}q_{ik}^{z_k}\\
(1-m_i)(1-n_i)\geq\alpha
\end{cases}\!\!\!
{\Longleftrightarrow}
\begin{cases}
\ln(m_i) = \sum\nolimits_{j\in\mathcal{J}}\ln(p_{ij})y_j\\
\ln(n_i) = \sum\nolimits_{k\in\mathcal{K}}\ln(q_{ik})z_k\\
(1-m_i)(1-n_i)\geq\alpha
\end{cases}\!\!\!\!\!\!\!\!\!,
\end{align}
where for all $i\in\mathcal{I}$ with $m_i$, $n_i$, $\alpha \in [0,1]$. Therefore, GUTLCSCP can be reformulated as:

\vspace*{-4mm}
\begin{small}
\begin{align}
\min~&\sum\limits_{j\in\mathcal{J}}c^1_jy_j+\sum\limits_{k\in\mathcal{K}}c^2_kz_k&\nonumber\\
{\rm s.t.}~&\ln(m_i) = \sum\limits_{j\in\mathcal{J}}\ln(p_{ij})y_j&\forall i\in\mathcal{I}\label{GUTLCSCP:c1}\\
&\ln(n_i) =\sum\limits_{k\in\mathcal{K}}\ln(q_{ik})z_k&\forall i\in\mathcal{I}\label{GUTLCSCP:c2}\\
&(1-m_i)(1-n_i)\geq\alpha&\forall i\in\mathcal{I}\label{GUTLCSCP:c3}\\
&y_j\in \{0,1\}&\forall j \in \mathcal{J}\nonumber\\
&z_k\in \{0,1\}&\forall k \in \mathcal{K}\nonumber\\
&0\leq m_i\leq1&\forall i\in\mathcal{I}\\
&0\leq n_i\leq1&\forall i\in\mathcal{I}.
\end{align}
\end{small}

\vspace*{-4mm}
The key issue is to deal with the constraint (\ref{GUTLCSCP:c3}). Since $m_i$, $n_i$, $\alpha \in [0,1]$, constraint (\ref{GUTLCSCP:c3}) can be reformulated in part as follows.
\begin{align}
\begin{cases}
1-m_i\geq\alpha\\
1-n_i\geq\alpha
\end{cases}\!\!\!\!\!\!\!\!\Longleftrightarrow
\begin{cases}
\sum\nolimits_{j\in\mathcal{J}}\ln(p_{ij})y_j\leq\ln(1-\alpha)\\
\sum\nolimits_{k\in\mathcal{K}}\ln(q_{ik})z_k\leq\ln(1-\alpha)
\end{cases}\label{GUTLCSCP:c4}
\end{align}
for all $i\in\mathcal{I}$.

According to the Eqs. (\ref{GUTLCSCP:c1}) and 
(\ref{GUTLCSCP:c2}), $\ln(m_i)$ and $\ln(n_i)$ are linear functions with respect to $y_j$ and $z_k$. Besides, we have obtained Eq. (\ref{GUTLCSCP:c4}). As a result, we can construct constraints like
\begin{align}
\beta\ln(m_i)+\gamma\ln(n_i)\leq \ln(F_i(\alpha,\beta,\gamma))\label{GUTLCSCP:c5},
\end{align}
where $\beta+\gamma=1$, $F_i(\alpha,\beta,\gamma)$ is a function with respect to $\alpha$, $\beta$ and $\gamma$ for all $i\in\mathcal{I}$. In order to determine the parameters of the constraint (\ref{GUTLCSCP:c5}), we can find the function tangent to constraint (\ref{GUTLCSCP:c3}).

The constraint (\ref{GUTLCSCP:c5}) can be reformulated as follows

\vspace*{-4mm}
\begin{small}
\begin{align}
\beta\ln(m_i)\!+\!\gamma\ln(n_i)\!\leq\! \ln(F_i(\alpha,\beta,\gamma)){\Longleftrightarrow}
m_i^\beta n_i^\gamma\!\leq\! F_i(\alpha,\beta,\gamma)\label{GUTLCSCP:c7}.
\end{align}
\end{small}

Set $(1-m_i)(1-n_i)=\alpha$, then $m_i=1-\alpha/(1-n_i)$. We can substitute it into Eq. (\ref{GUTLCSCP:c7}) and obtain
\begin{align}
f(n_i)=\left(1-\frac{\alpha}{1-n_i}\right)^\beta n_i^\gamma\label{GUTLCSCP:c8}.
\end{align}
Then by determining the first derivative of Eq. (\ref{GUTLCSCP:c8}), which is $f'(n_i)=0$, the tangency function is obtained.

The solution to $f'(n_i)=0$ is as follows
\begin{align*}
\begin{cases}
n_{i1}=0\\
n_{i2}=1-\alpha\\
n_{i3}=\frac{2\gamma + \alpha\beta - \alpha\gamma - \sqrt{\alpha(4\beta\gamma + \alpha\beta^2 + \alpha\gamma^2 - 2\alpha\beta\gamma)}}{2\gamma}\\
n_{i4}=\frac{2\gamma + \alpha\beta - \alpha\gamma + \sqrt{\alpha(4\beta\gamma + \alpha\beta^2 + \alpha\gamma^2 - 2\alpha\beta\gamma)}}{2\gamma}
\end{cases}.
\end{align*}

The corresponding tangency function is obtained when $n_{i3}$ is selected. Fig. \ref{GUTLCSCP_1} shows comparison between constraint (\ref{GUTLCSCP:c5}) after linear approximation and nonlinear constraint (\ref{GUTLCSCP:c3}) when $\alpha=0.9$, $\beta=0.5$ and $\gamma=0.5$. The range for x-axis and y-axis are determined by Eq. (\ref{GUTLCSCP:c4}) within $[0,0.1]$. Fig. \ref{GUTLCSCP_1} shows that there exists region between these two constraints with one pair of $\beta/\gamma$. Therefore, multiple combination of $\beta/\gamma$ are needed. Fig. \ref{GUTLCSCP_3} shows the comparison when $\beta=[0.1~0.3~0.5~0.7~0.9]$. Fig. \ref{GUTLCSCP_2} combines these $\beta/\gamma$ together to get a intersection of those constraints. Obviously, the linear approximate constraints show great similarity to the original nonlinear constraint. The decision space under the linear approximate constraints is slightly bigger than under nonlinear constraint. Relaxing the problem in Eq. (\ref{GUTLCSCP:2}) leads to the following linear approximation formulation of the GUTLCSCP (GUTLCSCP-LA):

\vspace*{-4mm}
\begin{small}
\begin{align*}
\min~&\sum\limits_{j\in\mathcal{J}}c^1_jy_j+\sum\limits_{k\in\mathcal{K}}c^2_kz_k&\nonumber\\
{\rm s.t.}~&\sum\limits_{j\in\mathcal{J}}\ln(p_{ij})y_j\leq\ln(1-\alpha)&\forall i\in\mathcal{I}\nonumber\\
&\sum\limits_{k\in\mathcal{K}}\ln(q_{ik})z_k\leq\ln(1-\alpha)&\forall i\in\mathcal{I}\nonumber\\
&\beta\sum\limits_{j\in\mathcal{J}}\ln(p_{ij})y_j+\gamma\sum\limits_{k\in\mathcal{K}}\ln(q_{ik})z_k\nonumber\\
&\leq \ln\left[\left(1-\frac{\alpha}{1-\delta}\right)^\beta n_i^\gamma\right]&\forall i\in\mathcal{I}\nonumber\\
&y_j\in \{0,1\}&\forall j \in \mathcal{J}\nonumber\\
&z_k\in \{0,1\}&\forall k \in \mathcal{K}\nonumber,
\end{align*}
\end{small}where $\delta=\frac{2\gamma + \alpha\beta - \alpha\gamma - \sqrt{\alpha(4\beta\gamma + \alpha\beta^2 + \alpha\gamma^2 - 2\alpha\beta\gamma)}}{2\gamma}$. $\beta$, $\gamma\in [0,1]$ are constants or vectors with $\beta+\gamma=1$.

\begin{figure}[!htp]
	\centering
	\subfigure[$\beta=0.5$, $\gamma=0.5$]{
		\includegraphics[width=0.23\textwidth]{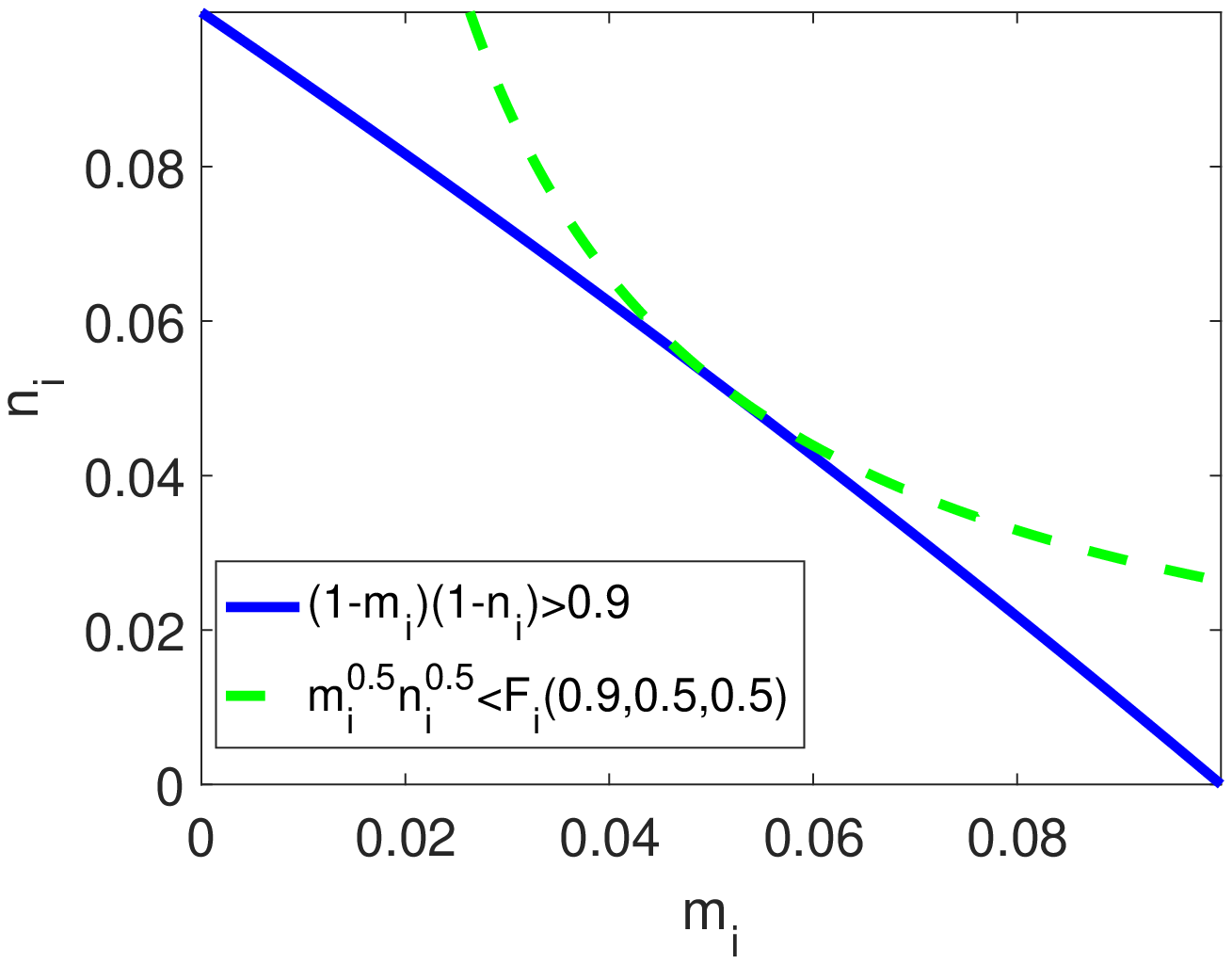}\label{GUTLCSCP_1}}
	\subfigure[multiple constraints with different $\beta/\gamma$  ]{
		\includegraphics[width=0.23\textwidth]{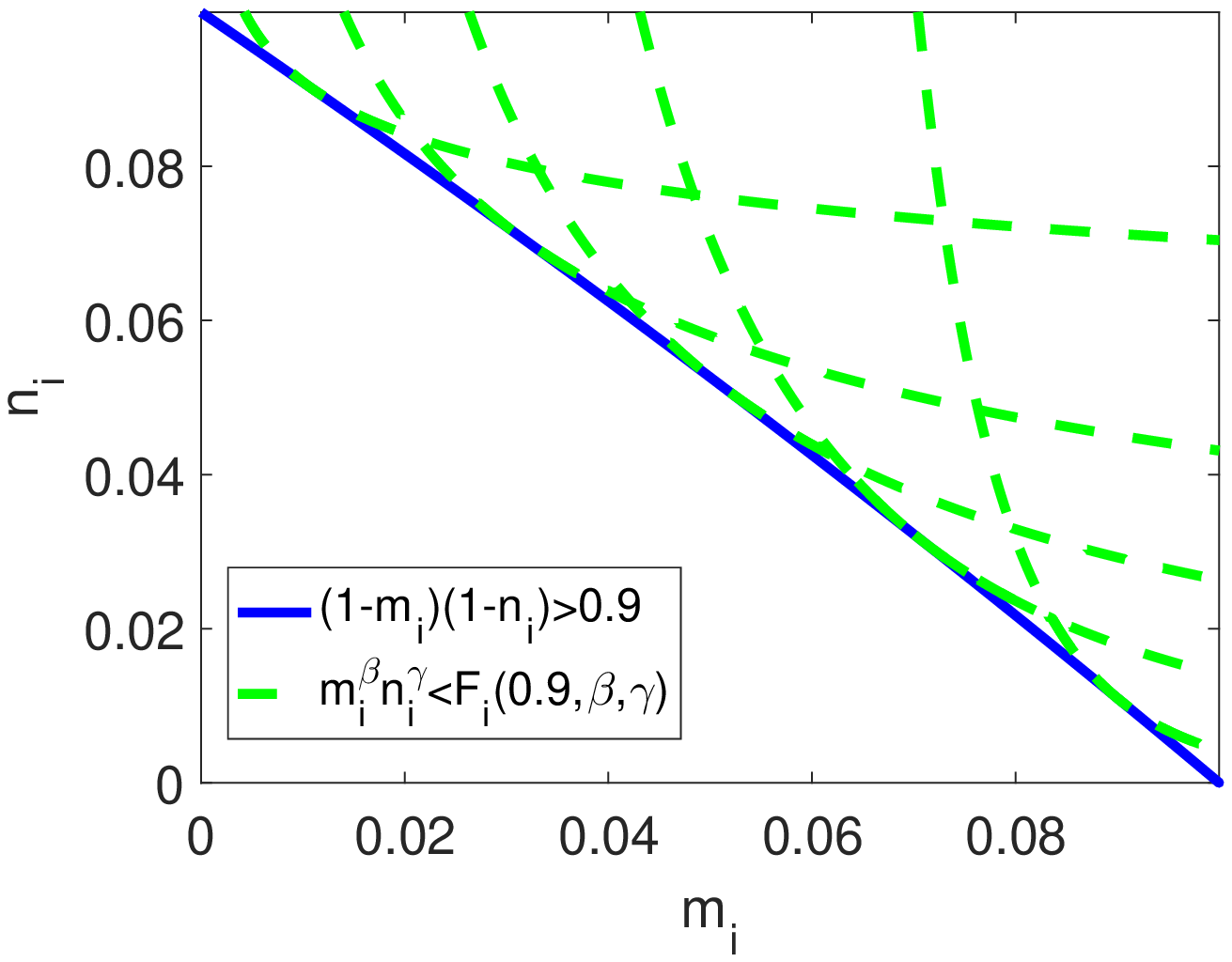}\label{GUTLCSCP_3}}
	\subfigure[multiple constraints with different $\beta/\gamma$ after combination]{
		\includegraphics[width=0.23\textwidth]{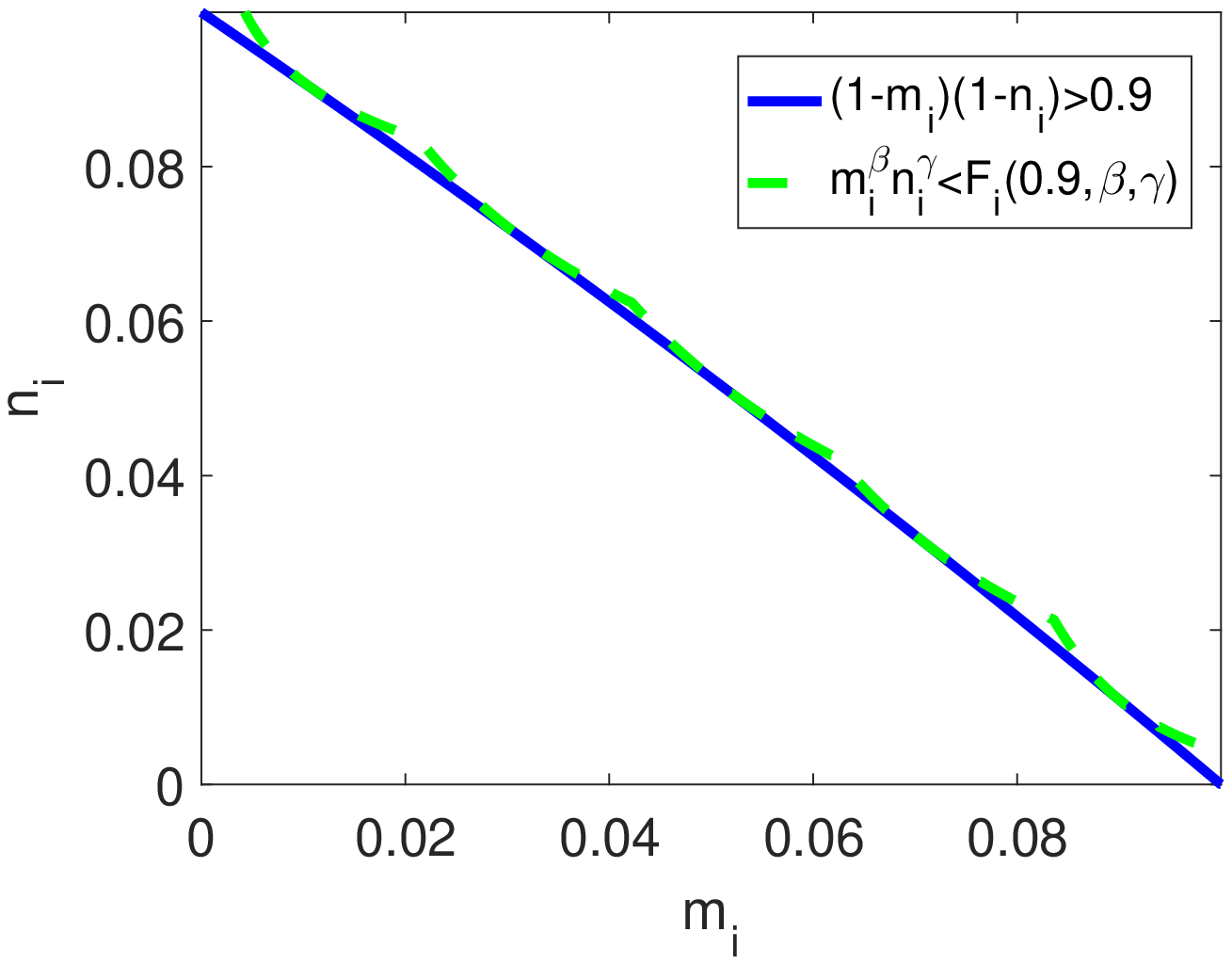}\label{GUTLCSCP_2}}
	\caption{Comparison between constraint (\ref{GUTLCSCP:c5}) after linear approximation and nonlinear constraint (\ref{GUTLCSCP:c3}) when $\alpha=0.9$.}\label{GUTLCSCP}
\end{figure}

\begin{remark}\label{rmk1}
	The linear approximation (LA) method used here actually transforms the original problem to a constraint relaxed problem as an interger linear programming program. All the problems in this paper with LA are the constraint relaxed version of the original problems.
\end{remark}

\subsection{Modeling the Robust Uncertain Two-Level Cooperative Set Covering Problem}
\label{sec3:2}

The Robust Uncertain Two-Level Cooperative Set Covering Problem (RUTLCSCP) is formulated based on the GUTLCSCP, with fluctuation of the probabilities $p_{ij}$ and $q_{ik}$.

In real-world applications, the probabilities $p_{ij}$ and $q_{ik}$ are not precisely known \cite{lutter2017improved}. They can be estimated based on historical data. However, these estimated values could not reflect the whole situation. In some situation, estimated values may be too optimistic, while in other situation, they may be too pessimistic. Hence, there exists a natural fluctuation of the probabilities. Therefore, in order to model the effect of these fluctuations, an interval is established based on the \textit{nominal value}. This interval covers the range of the probabilities. As a result, this description of probability is more reasonable than using a particular value \cite{lutter2017improved}.

The following description is based on a $\Gamma$-scenario set proposed by Bertsimas and Sim \cite{bertsimas2004price}. There are at most $\Gamma$ values deviate from their \textit{nominal value}. When $\Gamma=n$, all parameters are allowed to deviate, which is equivalent to Soyster's robust model \cite{soyster1973convex}. However, this model is too conservative. $\Gamma$ models the risk attitute of the parameters \cite{lutter2017improved}, and it is also called the budget of uncertainty.

We assume that $p_{ij}$ and $q_{ik}$ are uncertain variable within the interval $[\bar{p}_{ij},\bar{p}_{ij}+\hat{p}_{ij}]\subseteq[0,1]$ and $[\bar{q}_{ik},\bar{q}_{ik}+\hat{q}_{ik}]\subseteq[0,1]$ where $\bar{p}_{ij}\geq0$ and $\bar{q}_{ik}\geq0$ are the \textit{nominal value}, $\hat{p}_{ij}\geq0$ and $\hat{q}_{ik}\geq0$ are the \textit{worst case deviation}. The two $\Gamma$-scenairo sets are given by

\vspace*{-4mm}
\begin{small} 
\begin{align*}
&\mathscr{U}^{\Gamma_i}_1\!:=\!\left\{p_{i:}|\forall j\in\mathcal{J}:p_{ij}\in[\bar{p}_{ij},\bar{p}_{ij}\!+\!\hat{p}_{ij}],\sum\limits_{j\in\mathcal{J}}\frac{p_{ij}-\bar{p}_{ij}}{\hat{p}_{ij}}\leq\Gamma_i\!\right\}\\
&\mathscr{U}^{\Gamma_i}_2\!:=\!\left\{q_{i:}|\forall k\in\mathcal{K}:q_{ik}\in[\bar{q}_{ik},\bar{q}_{ik}\!+\!\hat{q}_{ik}],\sum\limits_{k\in\mathcal{K}}\frac{q_{ik}-\bar{q}_{ik}}{\hat{q}_{ik}}\leq\Gamma_i\!\right\}\!\!,
\end{align*}
\end{small}for all $i\in\mathcal{I}$, where $p_{i:}:=(p_{ij})_{j\in\mathcal{J}}$, $q_{i:}:=(q_{ik})_{k\in\mathcal{K}}$.

The difference between RUTLCSCP and GUTLCSCP is that for any $i\in\mathcal{I}$, there exists two-level-cooperative $\alpha$-cover in RUTLCSCP with probabilities satisfying $p_{i:}\in\mathscr{U}^{\Gamma_i}_1$ and $q_{i:}\in\mathscr{U}^{\Gamma_i}_2$. We can consider the worst case: there exists $\Gamma_i\in\mathbb{N}_0$ entries in $p_{i:}$ and $q_{i:}$ derive from their nominal value, which are worst case derivation. The other entries in $p_{i:}$ and $q_{i:}$ are their nominal values $\bar{p}_{ij}$ and $\bar{q}_{ik}$. A $\Gamma$-robust two-level-cooperative $\alpha$-cover is defined as follows.

\begin{definition}
	($\Gamma$-robust two-level-cooperative $\alpha$-cover). Set $i\in\mathcal{I}$, $\Gamma_i\in\mathbb{N}_0$, $\Gamma=(\Gamma_i)_{i\in\mathcal{I}}$, $\alpha\in[0,1)$.  For all $j\in\mathcal{J}$ and $k\in\mathcal{K}$, $p_{ij}$ are within range $[\bar{p}_{ij},\bar{p}_{ij}+\hat{p}_{ij}]\subseteq[0,1]$, $q_{ik}$ are within range $[\bar{q}_{ik},\bar{q}_{ik}+\hat{q}_{ik}]\subseteq[0,1]$.  The worst-case coverage probability for set $C^1\subseteq\mathcal{J}$ and set $C^2\subseteq\mathcal{K}$ can be defined by
	
	\vspace*{-4mm}
	\begin{small}
		\begin{align*}
		&P_{\Gamma_i}\!\!\!\left(\sum\limits_{j\in\mathcal{C}^1}\!a_{ij}\!\geq1\!\right)\!\!:=1-\!\!\!\!\!\!\mathop{\max}_{\{\mathcal{U}_1\subseteq\mathcal{C}^1:|\mathcal{U}_1|\leq\Gamma_i\}}\left\{\prod\limits_{j\in\mathcal{U}_1}(\bar{p}_{ij}\!+\!\hat{p}_{ij})\!\cdot\!\!\!\!\!\!\!\prod\limits_{j\in\mathcal{C}^1\backslash\mathcal{U}_1}\!\!\!\bar{p}_{ij}\right\}\\
		&P_{\Gamma_i}\!\!\!\left(\sum\limits_{k\in\mathcal{C}^2}\!b_{ik}\!\geq1\!\right)\!\!:=1-\!\!\!\!\!\!\mathop{\max}_{\{\mathcal{U}_2\subseteq\mathcal{C}^2:|\mathcal{U}_2|\leq\Gamma_i\}}\left\{\prod\limits_{k\in\mathcal{U}_2}(\bar{q}_{ik}\!+\!\hat{q}_{ik})\!\cdot\!\!\!\!\!\!\!\prod\limits_{k\in\mathcal{C}^2\backslash\mathcal{U}_2}\!\!\!\bar{q}_{ik}\right\}.
		\end{align*}
	\end{small}
	
	\vspace*{-4mm}
	A $\Gamma$-robust two-level-cooperative $\alpha$-cover with $C^1\subseteq\mathcal{J}$ and $C^2\subseteq\mathcal{K}$ have a worst-case coverage probability $P_{\Gamma_i}\left(\sum\nolimits_{j\in\mathcal{C}^1}a_{ij}\geq1\right)\cdot P_{\Gamma_i}\left(\sum\nolimits_{k\in\mathcal{C}^2}b_{ik}\geq1\right)$ greater or equals to $\alpha$. When all $i\in\mathcal{I}$ for set $\mathcal{C}^1$ and set $\mathcal{C}^2$ satisfying $\Gamma$-robust two-level-cooperative $\alpha$-cover, then a $\Gamma$-robust two-level-cooperative $\alpha$-cover is obtained.
\end{definition}

The RUTLCSCP is to find a $\Gamma$-robust two-level-cooperative $\alpha$-cover of minimum costs. A nonlinear formulation can be defined in the following:

\vspace*{-4mm}
\begin{small}
	\begin{align}
	\min~&\sum\limits_{j\in\mathcal{J}}c^1_jy_j+\sum\limits_{k\in\mathcal{K}}c^2_kz_k&\nonumber\\
	{\rm s.t.}~&P_{\Gamma_i}\left(\sum\limits_{j\in\mathcal{J}}a_{ij}y_j\geq1\right)\cdot P_{\Gamma_i}\left(\sum\limits_{k\in\mathcal{K}}b_{ik}z_k\geq1\right)\geq\alpha&\forall i \in \mathcal{I}\label{RUTLCSCP:1}\\
	&y_j\in \{0,1\}&\forall j \in \mathcal{J}\nonumber\\
	&z_k\in \{0,1\}&\forall k \in \mathcal{K}\nonumber.
	\end{align}
\end{small}
A solution $y^*\in\{0,1\}^{n_1}$, $z^*\in\{0,1\}^{n_2}$ is called robust feasible when $\Gamma$-robust two-level-cooperative $\alpha$-cover is satisfied. There exists two maximum subproblems in Eq. (\ref{RUTLCSCP:1}) defined as

\vspace*{-6mm}
\begin{small}
	\begin{align}
	\beta^1_i(y,\Gamma_i):=\mathop{\max}_{\{\mathcal{U}_1\subseteq\mathcal{C}^1(y):|\mathcal{U}_1|\leq\Gamma_i\}}\left\{\prod\limits_{j\in\mathcal{U}_1}(\bar{p}_{ij}\!+\!\hat{p}_{ij})^{y_j}\cdot\!\!\prod\limits_{j\in\mathcal{J}\backslash\mathcal{U}_1}\!\bar{p}_{ij}^{y_j}\right\}\label{RUTLCSCP:2}\\
	\beta^2_i(z,\Gamma_i):=\mathop{\max}_{\{\mathcal{U}_2\subseteq\mathcal{C}^2(z):|\mathcal{U}_2|\leq\Gamma_i\}}\left\{\prod\limits_{k\in\mathcal{U}_2}(\bar{q}_{ik}\!+\!\hat{q}_{ik})^{z_k}\cdot\!\!\prod\limits_{k\in\mathcal{K}\backslash\mathcal{U}_2}\!\bar{q}_{ik}^{z_k}\right\}\label{RUTLCSCP:3}
	\end{align}
\end{small}where for all $i\in\mathcal{I}$. For a given solution $y^*\in\{0,1\}^{n_1}$, $z^*\in\{0,1\}^{n_2}$.

Therefore, the RUTLCSCP can be reformulated as 

\vspace*{-4mm}
\begin{small}
\begin{align}
\min~&\sum\limits_{j\in\mathcal{J}}c^1_jy_j+\sum\limits_{k\in\mathcal{K}}c^2_kz_k&\nonumber\\
{\rm s.t.}~&\left[1-\beta^1_i(y,\Gamma_i)\right]\cdot\left[1-\beta^2_i(z,\Gamma_i)\right]\geq\alpha&\forall i \in \mathcal{I}\label{RUTLCSCP:4}\\
&y_j\in \{0,1\}&\forall j \in \mathcal{J}\nonumber\\
&z_k\in \{0,1\}&\forall k \in \mathcal{K}\nonumber.
\end{align}
\end{small}

\vspace*{-4mm}
Similarily, we can develop the linear approximate model of the RUTLCSCP based on the GUTLCSCP-LA. Meanwhile, applying the strong duality theorem, we can develop the robust counterpart (RC) of the robust model RUTLCSCP-LA-RC, which is a compact mixed-integer linear programming problem:

\vspace*{-4mm}
\begin{small}
	\begin{align*}
	\min~\!\!\!&\sum\limits_{j\in\mathcal{J}}c^1_jy_j+\sum\limits_{k\in\mathcal{K}}c^2_kz_k&\\
	{\rm s.t.}~&\sum\limits_{j\in\mathcal{J}}\ln(\bar{p}_{ij})y_j\!+\!\sum\limits_{j\in\mathcal{J}}\zeta_{ij}^1+\Gamma_i\eta_i^1\!\leq\ln(1-\alpha)&\forall i\in\mathcal{I}\\
	&\sum\limits_{k\in\mathcal{K}}\ln(\bar{q}_{ik})z_k\!+\!\sum\limits_{k\in\mathcal{K}}\zeta_{ik}^2+\Gamma_i\eta_i^2\!\leq\ln(1-\alpha)&\forall i\in\mathcal{I}\\
	&\beta\left[\sum\limits_{j\in\mathcal{J}}\ln(\bar{p}_{ij})y_j+\sum\limits_{j\in\mathcal{J}}\zeta_{ij}^1+\Gamma_i\eta_i^1\right]&\\
	&+\gamma\left[\sum\limits_{k\in\mathcal{K}}\ln(\bar{q}_{ik})z_k+\sum\limits_{k\in\mathcal{K}}\zeta_{ik}^2+\Gamma_i\eta_i^2\right]&\\
	&\leq \ln\left[\left(1-\frac{\alpha}{1-\delta}\right)^\beta n_i^\gamma\right]&\forall i\in\mathcal{I}\\
	&\zeta_{ij}^1+\eta_i^1\geq\left(\ln(\bar{p}_{ij}+\hat{p}_{ij})-\ln(\hat{p}_{ij})\right)y_j&\forall i\in\mathcal{I},j\in\mathcal{J}\\
	&\zeta_{ik}^2+\eta_i^2\geq\left(\ln(\bar{q}_{ik}+\hat{q}_{ik})-\ln(\hat{q}_{ik})\right)z_k&\forall i\in\mathcal{I},k\in\mathcal{K}\\
	&\zeta_{ij}^1\geq0&\forall i\in\mathcal{I},j\in\mathcal{J}\\
	&\zeta_{ik}^2\geq0&\forall i\in\mathcal{I},k\in\mathcal{K}\\
	&\eta_i^1\geq0&\forall i\in\mathcal{I}\\
	&\eta_i^2\geq0&\forall i\in\mathcal{I}\\
	&y_j\in \{0,1\}&\forall j \in \mathcal{J}\\
	&z_k\in \{0,1\}&\forall k \in \mathcal{K},
	\end{align*}
\end{small}where $\delta=\frac{2\gamma + \alpha\beta - \alpha\gamma - \sqrt{\alpha(4\beta\gamma + \alpha\beta^2 + \alpha\gamma^2 - 2\alpha\beta\gamma)}}{2\gamma}$. $\beta$, $\gamma\in [0,1]$ are constants or vectors with $\beta+\gamma=1$.

\section{Properties of the Model}\label{sec3}

There exists nonlinear, noncompact constraints and maximum subproblems in the proposed RUTLCSCP, which are hard to slove. A definition and two propositions are provided as follows.

\begin{definition}\label{definition 1}
	($\varepsilon$-under-approximate solution).  Given a scalar $\varepsilon>0$, a $\varepsilon$-under-approximate solution has a larger feasible region with constraints relaxed than the original feasible region with the original constraints. The new feasible region is obtained by linear approximation of the nonlinear constraints, i.e., $X_{LA}\in\Omega_{LA}=\{x|C_i(X)(1+\varepsilon)\geq\alpha, X=\mathop{\arg\min}_{X\in\Omega}\{F(X)\}, i\in\mathcal{I}\}$, where $\Omega$ is the feasible region of the original problem and $\Omega_{LA}$ is the approximate feasible region.
\end{definition}

\begin{proposition}\label{proposition 1}
	Suppose the solution to the linear approximate problem is $X_{LA}$ with the objective value $F_{LA}(X_{LA})$, while the soluiton for nonlinear constraints problem is $X$ with the objective value $F(X)$. Then we will have $F_{LA}(X_{LA})\leq F(X)$, which is a lower bound on the optimal objective function. If the nonlinear constraints is satisfied when we substitute the solution $X_{LA}$ into the original problem with nonlinear constraints, we will have $F_{LA}(X_{LA})=F(X)$. The nonlinear constraints problems include the GUTLCSCP and the RUTLCSCP, while the linear approximate problem are the GUTLCSCP-LA and the RUTLCSCP-LA-RC.
\end{proposition}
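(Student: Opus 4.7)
The plan is to exploit the observation recorded in Remark~\ref{rmk1} that the LA reformulation is obtained by relaxing (not reformulating equivalently) the original nonlinear constraint, while the robust-counterpart step from RUTLCSCP-LA to RUTLCSCP-LA-RC is exact via strong duality. Denote by $\Omega$ the feasible region of the nonlinear problem (either GUTLCSCP or RUTLCSCP) projected onto the $(y,z)$ variables, and by $\Omega_{LA}$ the analogous projection for the LA (or LA-RC) counterpart; note that both problems share the same linear objective $F(X)=F_{LA}(X)=\sum_{j\in\mathcal{J}}c_j^1 y_j+\sum_{k\in\mathcal{K}}c_k^2 z_k$. The proposition then reduces to establishing $\Omega\subseteq\Omega_{LA}$, after which both conclusions follow immediately.

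To prove this inclusion I would take any $(y,z)\in\Omega$ and exhibit a completion of the LA auxiliary variables $m_i,n_i,\zeta_{ij}^1,\zeta_{ik}^2,\eta_i^1,\eta_i^2$ satisfying every LA constraint. Define $m_i,n_i$ through the equalities (\ref{GUTLCSCP:c1})--(\ref{GUTLCSCP:c2}); for the robust model, set $(\zeta^1_{i:},\eta_i^1)$ and $(\zeta^2_{i:},\eta_i^2)$ equal to the optimal dual variables of the Bertsimas--Sim inner maxima (\ref{RUTLCSCP:2})--(\ref{RUTLCSCP:3}), for which strong duality holds because the inner problems are LP-relaxations of the cardinality-constrained maxima. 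The box constraints (\ref{GUTLCSCP:c4}) are implied by the nonlinear constraint (\ref{GUTLCSCP:c3}), since $(1-m_i)(1-n_i)\ge\alpha$ together with $1-n_i\le 1$ forces $1-m_i\ge\alpha$, and symmetrically for $n_i$. The tangent constraint (\ref{GUTLCSCP:c5}) is valid by the very construction of $F_i(\alpha,\beta,\gamma)$: the point $n_{i3}$ was chosen as the tangency point where $f'(n_i)=0$, so the hyperplane (\ref{GUTLCSCP:c5}) supports the curve $(1-m_i)(1-n_i)=\alpha$ from outside on the relevant region. Hence every $(m_i,n_i)$ feasible for the nonlinear constraint lies in the half-space cut by (\ref{GUTLCSCP:c5}).

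Given $\Omega\subseteq\Omega_{LA}$, the first conclusion is immediate: the nonlinear optimum $X\in\Omega$ is feasible in the LA problem, so $F_{LA}(X_{LA})=\min_{X'\in\Omega_{LA}}F_{LA}(X')\le F_{LA}(X)=F(X)$, which is the claimed lower bound. For the equality clause, if $X_{LA}$ substituted into the original problem satisfies every nonlinear constraint, then $X_{LA}\in\Omega$, so $F(X)\le F(X_{LA})=F_{LA}(X_{LA})$, and combining this with the reverse inequality yields $F(X)=F_{LA}(X_{LA})$. The main obstacle lies in the supporting-hyperplane step for (\ref{GUTLCSCP:c5}): one must verify that the tangent constructed at $n_{i3}$ really lies on the relaxed side of the curve $(1-m_i)(1-n_i)=\alpha$ throughout the box $[0,1-\alpha]^2$ cut out by (\ref{GUTLCSCP:c4}), which requires a convexity argument for the curve in the $(\ln m_i,\ln n_i)$ coordinates beyond the first-order tangency computation already carried out.
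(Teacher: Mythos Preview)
Your proposal is correct and follows the same core argument as the paper: establish $\Omega\subseteq\Omega_{LA}$ and deduce both conclusions from it. The paper's own proof is far terser than yours---it simply invokes Fig.~\ref{GUTLCSCP_2} to assert the relaxation $\Omega\subseteq\Omega_{LA}$ without constructing auxiliary variables, deriving the box constraints~(\ref{GUTLCSCP:c4}) from~(\ref{GUTLCSCP:c3}), or addressing the supporting-hyperplane/convexity point you flag---so your version supplies the analytic verification that the paper leaves to the picture.
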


\begin{proof}
The solution to the linear approximate problem is $X_{LA}\in\Omega_{LA}$, while the solution for nonlinear constraints problem is $X\in\Omega$. According to the Fig. \ref{GUTLCSCP_2}, nonlinear constraints are relaxed by linear approximation method. Therefore, $\Omega\in\Omega_{LA}$ is a subset of approximate feasible region. As a result, $F_{LA}(X_{LA})\leq F(X)$.  When the solution $X_{LA}$ satisfies the nonlinear constraints, that means $X_{LA}\in\Omega$. Therefore, we will have $F_{LA}(X_{LA})=F(X)$.
\end{proof}

\begin{proposition}\label{proposition 2}
	If the problem after linear approximation (GUTLCSCP-LA, RUTLCSCP-LA-RC) has no solution, the orginal problem with nonlinear constraints (RUTLCSCP-LA-RC, RUTLCSCP) has no solution as well. 
\end{proposition}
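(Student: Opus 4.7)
The plan is to prove this by contrapositive, leveraging exactly the feasible-region containment that was already established when deriving the linear approximation. First I would observe that the LA constraints were obtained by relaxing the nonlinear constraint (\ref{GUTLCSCP:c3}): the pair of one-sided bounds in (\ref{GUTLCSCP:c4}) and the tangent-based inequality (\ref{GUTLCSCP:c5}) are all necessary (but not sufficient) conditions satisfied by any $(m_i,n_i)$ fulfilling $(1-m_i)(1-n_i)\geq\alpha$, as is visible in Fig.~\ref{GUTLCSCP_2}. Consequently, any feasible point of GUTLCSCP (respectively RUTLCSCP) is also a feasible point of GUTLCSCP-LA (respectively RUTLCSCP-LA-RC), yielding the inclusion $\Omega\subseteq\Omega_{LA}$ already used in the proof of Proposition~\ref{proposition 1}.

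Next I would argue the claim by contraposition. Suppose the linear approximate problem has no feasible solution, i.e.\ $\Omega_{LA}=\emptyset$. Combining this with the inclusion $\Omega\subseteq\Omega_{LA}$ immediately forces $\Omega=\emptyset$, so the original nonlinear-constrained problem is infeasible as well. Equivalently, if the original problem admits any feasible solution $X\in\Omega$, then by the inclusion $X\in\Omega_{LA}$, so the linear approximation is also feasible, which is the contrapositive of the statement.

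For the robust case, the same argument applies once we also observe that the robust counterpart reformulation of the LA model obtained via the strong duality theorem preserves feasibility exactly: a solution $(y,z)$ is feasible for the robust version of the LA constraints if and only if there exist dual multipliers $\zeta^1_{ij},\zeta^2_{ik},\eta^1_i,\eta^2_i\geq0$ certifying the inner maxima, which are precisely the auxiliary variables appearing in RUTLCSCP-LA-RC. Hence $\Omega_{LA}=\emptyset$ for RUTLCSCP-LA-RC implies the robust LA feasible region is empty, which in turn (by the inclusion argument above applied to $P_{\Gamma_i}$ rather than $P$) forces the RUTLCSCP feasible region to be empty.

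I do not anticipate any real obstacle: the entire content of the proposition is the monotonicity of infeasibility under constraint relaxation, and that relaxation property was already justified when constructing the tangent-based approximation. The only care needed is to make the inclusion explicit for both the general (GUTLCSCP) and robust (RUTLCSCP) pairings, and to note that passing to the robust counterpart via duality does not alter the feasible set projected onto the $(y,z)$ variables.
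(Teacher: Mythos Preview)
Your proposal is correct and follows essentially the same approach as the paper: both arguments rest on the inclusion $\Omega\subseteq\Omega_{LA}$ (which the paper simply cites from Proposition~\ref{proposition 1}) and then observe that $\Omega_{LA}=\emptyset$ forces $\Omega=\emptyset$. Your treatment is more detailed---in particular the explicit remark that the robust counterpart reformulation via strong duality preserves the projected feasible set in $(y,z)$---but the core logic is identical.
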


\begin{proof}
Based on Proposition \ref{proposition 1}, we have $\Omega\subseteq\Omega_{LA}$. If there is no solution in the feasible region $\Omega_{LA}$, then there is no solution in the feasible region $\Omega$ as well. In other words, if there is no solution in the linear approximate problem, there is no solution in the original problem with nonlinear constraints.
\end{proof}

Therefore, based on the above propositions, as for problems in different scales, we could use exact method or solver (e.g., IBM-ILOG-CPLEX) to solve the RUTLCSCP-LA-RC in order to obtain the exact solution to the RUTLCSCP if the equality condition in Proposition \ref{proposition 1} is met. Otherwise, $\varepsilon$-under-approximate solution are obtained.

\section{Computational experiments and analysis}\label{sec4}

This section is devoted to the performance investigation of the proposed model.  At first, we present an RUTLCSCP test-case generator which can produce instances of different scales. Then, we solve the problem, which includes exact solutions for RUTLCSCP-LA-RC and approximate solutions for RUTLCSCP. All experiments were carried out on a PC with Intel Xeon E5 CPU 2.60GHz and 64 GB internal memory. RUTLCSCP-LA-RC problems were implemented in MATLAB R2016a using YALMIP as the modeling language and CPLEX 12.5 with default parameter settings.

\subsection{Test-Case Generator}

Due to the lack of benchmark instances for the RUTLCSCP in literature, we consider the following parameter setting. The fix costs coefficients building y-facility $c^1_j$ and z-facility $c^2_k$ were both randomly generated by sampling from a uniform distribution in [0, 100]. The nominal value of probabilities $\bar{p}_{ij}$ and  $\bar{q}_{ik}$ were both obtained by sampling from a uniform distribution in [0.9, 1.0]. Deviations for the default probability $\hat{p}_{ij}$ and $\hat{q}_{ik}$ were both taken from a uniform distribution in [0, 0.1]. Besides, we consider two covering ranges $yr$ and $zr$ for these two kinds of facilities. If the Euclidean distance of the demand node and facility location is greater than the covering range, the corresponding probability ${p}_{ij}$ or ${q}_{ik}$ is 0. Each demand node serves as candidate location site for y-facility and z-facility, i.e., $\mathcal{I}=\mathcal{J}=\mathcal{K}$. The position of the demand nodes are randomly generated within the region $Ax \times Ay$. All the RUTLCSCP formulations were solved for the parameters $\alpha\in\{0.8,0.85,0.9\}$ and $\Gamma\in\{0,\dots,|\mathcal{I}|\}$. 10 cases were considered. For each case, we randomly generated five different instances. In total, 10125 derived RUTLCSCP instances were generated. The detailed information of these instances are in Table \ref{testproblem}.

\vspace*{-2mm}
\begin{table}[!h]
	\setlength{\abovecaptionskip}{0.cm}
	\setlength{\belowcaptionskip}{-0.9cm}
	\renewcommand{\arraystretch}{0.9}
	\centering
	\caption{The test-case for RUTLCSCP} \label{testproblem}
	\resizebox{0.45\textwidth}{!}{\begin{tabular}{cccc}
		\toprule
		Instance		&$(|\mathcal{I}|,|\mathcal{J}|,|\mathcal{K}|)$  & $(yr/km,zr/km)$ & $(A_x/km, A_y/km)$\\
		\midrule
		P1.1--P1.5		&$(20,20,20)$ &$(10,5)$&$(25,25)$\\
		P2.1--P2.5		&$(25,25,25)$ &$(10,5)$&$(25,25)$\\
		P3.1--P3.5		&$(30,30,30)$ &$(10,5)$&$(25,25)$\\
		P4.1--P4.5    	&$(40,40,40)$ &$(14,7)$&$(50,50)$\\
		P5.1--P5.5 	  	&$(50,50,50)$ &$(14,7)$&$(50,50)$\\
		P6.1--P6.5		&$(60,60,60)$ &$(14,7)$&$(50,50)$\\
		P7.1--P7.5     	&$(80,80,80)$ &$(20,10)$&$(100,100)$\\
		P8.1--P8.5		&$(100,100,100)$ &$(20,10)$&$(100,100)$\\
		P9.1--P9.5		&$(120,120,120)$ &$(20,10)$&$(100,100)$\\
		P10.1--P10.5	&$(140,140,140)$ &$(20,10)$&$(100,100)$\\
		\bottomrule
	\end{tabular}}
\end{table}

\vspace*{-4mm}
\subsection{Results and Analysis}

We found that the approximation accuracy of the constraints are related with the amount of the $\beta/\gamma$ pairs. If we use more pairs, the approximation will be better, which increase the total running time of the algorithm. Therefore, one needs to balance these two conflicts. Here we considered the combination $\beta=[0.001~0.01~0.05~0.1~0.15~0.2~0.3~0.4~0.5~0.6~0.7~0.8~0.85~0.9$ $~0.95~0.99~0.999]$ based on empirical testing, where $\gamma=1-\beta$.

The results for RUTLCSCP are presented in Table \ref{result} with the following statistics:
\begin{itemize}
	\item {\it Proof of opt}: The proportion of instances in which the solution was proven to be optimal.
	\item {\it Time}: Arithmetic mean of run times in seconds. 
	\item {\it CV (constraint violation)}: The proportion of violated constraints in RUTLCSCP with feasible $\varepsilon$-under-approximate solution.
	\item {\it Degree of feasibility}: The ratio of feasible solutions without any violated constraint in RUTLCSCP-LA-RC and the total number of instances.
\end{itemize} 

\begin{table*}[!htp]
	\setlength{\abovecaptionskip}{0.0cm}
	\setlength{\belowcaptionskip}{-0.9cm}
	\renewcommand{\arraystretch}{0.9}
	\centering
	\vspace*{2mm}
	\caption{Computational results for RUTLCSCP} \label{result}
	\begin{threeparttable}
		\begin{tabular}{ccccccccccc}
			\toprule
			\multirow{2}{*}{Instance}  & \multicolumn{3}{c}{$\alpha=0.8$\tnote{$^*$}} & \multicolumn{3}{c}{$\alpha=0.85$\tnote{$^*$}} & \multicolumn{4}{c}{$\alpha=0.9$}\\
			\cmidrule(r){2-4}
			\cmidrule(r){5-7}
			\cmidrule(r){8-11}		
			&Opt. (\%) & Time & CV (\%) &Opt. (\%) & Time & CV (\%) &Opt. (\%) & Time & CV (\%) & Degree of feasibility (\%) \\
			\midrule
			P1.1--P1.5		&100.00	  &0.14	  &0.00 &100.00	  &0.17	  &0.00 &100.00	  &0.10	  &0.00	&61.90\\
			P2.1--P2.5		&100.00	  &0.21	  &0.00 &100.00	  &0.25	  &0.00 & 98.75	  &0.20	  &0.05 &61.54\\
			P3.1--P3.5		&100.00	  &0.24	  &0.00 &100.00	  &0.33	  &0.00 & 99.20	  &0.37	  &0.03 &80.65\\
			P4.1--P4.5    	&100.00	  &0.32	  &0.00 &100.00	  &0.48	  &0.00 &100.00	  &0.30	  &0.00 &21.95\\
			P5.1--P5.5 	  	&100.00	  &0.56	  &0.00 &100.00	  &0.76	  &0.00 &100.00	  &0.44	  &0.00 &41.18\\
			P6.1--P6.5		&100.00	  &1.03	  &0.00 &100.00	  &1.01	  &0.00 &100.00	  &0.44	  &0.00 &21.31\\
			P7.1--P7.5     	& 80.25	  &1.48	  &0.25 &100.00	  &1.60	  &0.00 &100.00	  &0.74	  &0.00 & 1.23\\
			P8.1--P8.5		&100.00	  &3.00	  &0.00 & 80.00	  &3.76	  &0.20 & 96.10	  &2.81	  &0.04 &40.59\\
			P9.1--P9.5		&100.00	  &4.59	  &0.00 &100.00	  &5.98	  &0.00 & 99.18	  &4.58	  &0.01 &40.50\\
			P10.1--P10.5	&100.00	  &7.29	  &0.00 & 80.14	  &8.33	  &0.14 &100.00	  &5.68	  &0.00 &20.57\\
			\bottomrule
		\end{tabular}
		\begin{tablenotes}
			\item [$^*$] Degree of feasibility are 100\%.
		\end{tablenotes}
	\end{threeparttable}
\end{table*}

\begin{table}[!htp]
	\setlength{\abovecaptionskip}{0.cm}
	\setlength{\belowcaptionskip}{-0.9cm}
	\renewcommand{\arraystretch}{0.9}
	\centering
	\caption{Instance types with constraint violation} \label{result2}
	\begin{threeparttable}
		\begin{tabular}{cccccc}
			\toprule
			Instance & $\alpha$ & $\Gamma$ &Obj. 	 & $\phi$ 		&	$\#$ \\
			\midrule
			P2.2 	 & 0.9	 &0		&463.94 &7.40E-06     &1/20\\
			P3.1     & 0.9   &0		&309.00 &1.29E-05	  &1/25\\
			P7.1	 & 0.8   &1+	&1464.12&3.79E-04     &1/80\\
			P8.1	 & 0.9	 &0		&1258.93&1.13E-05	  &1/100\\
			P8.2	 & 0.85  &0		&1514.56&4.21E-04	  &1/100\\
			P8.2	 & 0.9   &0		&1603.00&8.01E-06	  &1/100\\
			P8.3	 & 0.85  &1+	&1444.47&2.51E-04$^\ddag$	  &1/100\\
			P8.3	 & 0.9   &0		&1409.12&1.93E-04	  &1/100\\
			P8.3	 & 0.9   &2+	&1942.41$^\dag$ &2.12E-04$^\ddag$&1/100\\ 
			P9.2	 & 0.9   &0		&1501.84&3.91E-04	  &1/120\\
			P9.3	 & 0.9   &0		&1312.91&3.97E-06     &1/120\\
			P10.3	 & 0.85  &1+	&1408.94&3.44E-04	  &1/140\\
			\bottomrule
		\end{tabular}
		\begin{tablenotes}
			\item [$^\dag$]  The objective value are still varying with different $\Gamma$.
			\item [$^\ddag$] The total constraint violations are varying with different $\Gamma$.
		\end{tablenotes}
	\end{threeparttable}
\end{table}

From Table \ref{result}, there exists unfesible solutions for RUTLCSCP-LA-RC since the degree of feasibility is less than 100\%. These instances are especially those with $\alpha=0.9$ and $\Gamma\geq1$. As a result, the corresponding instances of RUTLCSCP have no solution. Besides, for some instances, the solutions violate the original nonlinear constraints but feasible to RUTLCSCP-LA-RC, which are the $\varepsilon$-under-approximate solutions. These instances are shown in Table \ref{result2} with 12 instances types. $\phi$ represents the total constraint violations and $\#$ stands for the proportion of violations with total nonlinear constraints. The solutions for the remaining instances are also the solutions for the original problem RUTLCSCP. Most of the $\phi$ of the approximate solutions are in level of E-4$\sim$E-6, and with only one violated constraint, which means great approximation. The corresponding objective value is closely lower than optimal value, which is an efficient under-approximation and lower bound. 

Due to the limited space for this paper, details of the objective value for each instance are not shown. For $\alpha=0.8$ and $\alpha=0.85$, the objective value are same when $\Gamma\geq1$. However, for $\alpha=0.9$, the objective value are different under different $\Gamma$. Most of the instances have the same objective value when $\Gamma\geq 2$. Noted that in P8.3 ($\alpha=0.9$, $\Gamma\geq2$) marked with $^\dag$, the objective value are still varying when $\Gamma\geq 2$. When $\Gamma=2,3,6,18,19$, the corresponding objective values are 1942.41. The values for the rest are 1947.82. The total constraint violations marked with $^\ddag$ means that they are varying with different $\Gamma$. For example, in P8.3 ($\alpha=0.85$, $\Gamma\geq1$), $\phi=2.51E-04$ when $\Gamma=1$; while $\phi=6.30E-04[1]$ when $\Gamma\geq2$. In P8.3 ($\alpha=0.9$, $\Gamma\geq2$), $\phi=2.12E-04[1]$ when $\Gamma=2,3,6,18,19$; while the rest are constraint satisfied. Noted that CPLEX can efficiently solve RUTLCSCP-LA-RC, with the computation time less then 10 seconds.

In summary, a set of 10125 instances are generated and solve with good quality and acceptable time. Up to 74.10\% (7502 instances) are solved to optimality, 3.29\% (333 instances) are under-approximation, and 22.62\% (2290 instances) are with no solution.

\vspace*{-1mm}
\section{Conclusion}\label{sec5}
\vspace*{-1mm}

In this paper, we consider an extension of the set covering problem (SCP) called the robust uncertain two-level cooperative set covering problem (RUTLCSCP) by the integration of uncertainty in covering demand nodes. The concepts of probabilistic, robust optimization, and cooperative covering are combined and a compact mixed-integer linear programming (MILP) formulation for the RUTLCSCP is proposed. Computational experiments demonstrates that the RUTLCSCP can be efficiently solved with optimal solutions and a few under-approximate solutions.

In the future, over-approximate solutions with more constraints and less feasible region are likely to investigated. Besides, new exact or heuristic algorithms, new reformulation, and multi-level of the model can be considered. Meanwhile, the proposed model can be applied in many other real-world applications, e.g., collaborative task assignment \cite{xu2020bi}, joint allocation of heterogeneous stochastic resources \cite{wang2019heuristic}, etc.




\vspace*{-1.5mm}

\end{document}